\newcommand{\p}{\partial}
\newcommand{\del}{{\partial}}
\newcommand{\dd}{\mathrm{d}}
\newcommand{\R}{\mathbb{R}}
\newcommand{\ti}{\textsf{I}_\Omega}
\newcommand{\D}{\mathbb{\D}}
\newcommand{\LL}{\mathcal{L}}
\renewcommand{\O}{{\mathcal O}}
\newcommand{\tr}{\tilde \rho}
\newcommand{\tx}{\tilde{x}}
\newcommand{\tu}{\tilde{u}}
\newcommand{\tp}{\tilde{p}}
\newtheorem{theorem}{Theorem}[section]
\newtheorem{lemma}{Lemma}[section]
\newtheorem{definition}{Definition}[section]
\newtheorem{remark}{Remark}[section]
\begin{document}


\title{High Mach number limit of one-dimensional piston
problem for non-isentropic compressible Euler equations: Polytropic gas} 

\author{Aifang Qu}
\email[ E-Mail: ]{afqu@shnu.edu.cn, aifangqu@163.com}
\thanks{The research of Aifang Qu is supported by the National Natural Science Foundation of China (NNSFC) under Grant Nos. 11571357, 11871218.}
\affiliation{Department of Mathematics, Shanghai Normal University, Shanghai  200234,  China}

\author{Hairong Yuan}
\email[Corresponding author.  E-Mail: ]{hryuan@math.ecnu.edu.cn}
\thanks{
The research of Hairong
Yuan is supported by NNSFC under Grant Nos. 11371141, 11871218, and by Science and Technology Commission of Shanghai Municipality (STCSM) under Grant No. 18dz2271000.}
\affiliation{School of Mathematical Sciences and Shanghai Key Laboratory of Pure Mathematics and Mathematical Practice,
East China Normal University, Shanghai
200241, China}

\author{Qin Zhao}
\email[ E-Mail: ]{zhao@sjtu.edu.cn}
\affiliation{School of Mathematical Sciences, Shanghai Jiao Tong University, Shanghai 200240, China}


\date{\today}

\begin{abstract}
We study high Mach number limit of the one dimensional piston problem for the full compressible Euler equations of polytropic gas, for both cases that the piston rushes into or recedes from the uniform still gas, at a constant speed. There are two different situations, and one needs to consider  measure solutions of the Euler equations to deal with concentration of mass on the piston, or formation of vacuum. We formulate the piston problem in the framework of Radon measure solutions, and show its consistency by proving that the integral weak solutions of the piston problems converge weakly in the sense of measures to (singular) measure solutions of the limiting problems, as the Mach number of the piston increases to infinity.
\end{abstract}

\pacs{47.40.-x; 02.30.Jr}

\maketitle 


\section{{Introduction}}\label{sec1}
Piston problem, as a prototype in the theory of mathematical gas dynamics and systems of hyperbolic conservation laws, has been studied extensively in the literature (see, for example, \cite{CWZ,D,DL2,DL,DKZ,S} and references therein). The problem may be described as follows. Suppose there is a piston which can move leftward or rightward in an infinite long and thin tube extending along the horizontal $x$-axis. The tube is filled with gas, which is enclosed by the piston on the right hand side and is uniform and still initially.  As we know, if the piston rushes into the gas with a constant speed, a shock wave will appear ahead of the piston, while a rarefaction wave will be present in front of the piston if it recedes from the gas. We are wondering what happens if the piston moves so fast, in the sense that the Mach number of the piston ({\it i.e.}, the ratio of the speed of the piston to the sound speed of the still gas) goes to infinity? Is there any limiting solution to the Euler equations and what does it look like?

To our knowledge, such a problem has not been investigated before. It turns out that there are two different cases, and for certain situation, we need to extend the admissible class of solutions to the Radon measures, since concentration of mass or formation of vacuum may appear. We propose a way to formulate the piston problem if the unknowns are general Radon measure on space-time domains. To demonstrate its reasonability, we also prove that the standard self-similar integral weak solutions of the piston problems converge weakly as measures to the limiting measure solutions.

We believe that the results are useful. In our studies of hypersonic limit of steady compressible Euler flows passing wedges \cite{QYZ}, we had discovered that one need singular measure solutions of Euler equations to put a rigorous mathematical foundation to the so called ``Newton theory" of infinite thin shock layers of hypersonic flows passing a straight wedge. It is well known that for hypersonic flow past a thin body, the problem might be simplified to a piston problem \cite{LO}. This turns our attention to the piston problem. These studies show that considering general measure solutions is necessary for a complete theory of the compressible Euler equations of polytropic gas. Also, once we have a limiting solution, it might be used for theoretical and numerical studies of piston problem when its Mach number is quite large (but not infinite), as done by Hu \cite{H}, Hu and Zhang \cite{HZ} in the studies of high Mach number supersonic flows past curved obstacles.

We review that singular measure solutions, mainly delta shocks, have been found necessary to solve compressible Euler equations of pressureless flows, or for Chaplygin gases (see, for example,  \cite{CSW2, CL, CL2, GLY, J, NS, SWY, YS,YW} and references therein), or many other hyperbolic systems whose characteristics are all linearly degenerate \cite{YZ}. In \cite{CSW}, Cavalletti {\it et.al.} showed existence of a kind of measure solution to the Cauchy problem of multi-dimensional compressible Euler equations. Huang and Wang \cite{HW} established a well-posedness theory of one-dimensional pressureless Euler equations with Radon measure as initial data. These studies focus on initial-value problems, and indicate as well the importance of considering measure solutions of systems of conservation laws.

However, it seems that there is no work on measure solutions of initial-boundary-value problems of conservation laws \cite{QYZ}. Also, the singular measure solution appeared in our work is not a delta shock, since the singular part  is supported on the boundary of the domain. We also discover that the high Mach number limit is not simply the vanishing pressure limit which was widely studied \cite{CL, CL2,GLY,SWY,YS}.

The paper is organized as follows. In Section \ref{sec2}, we formulate the piston problem in a way convenient for studying high Mach number limit, and present a definition of measure solutions. The main results are Theorem \ref{thm41} and Theorem \ref{thm51} given at the end of this section. It shall be noted that there are two situations of high Mach number limit after nondimensionalization. One for which the temperature of the gas decreases to zero, while adiabatic exponent $\gamma>1$ of the gas is fixed. The other is for which temperature being positive and fixed, while $\gamma\downarrow1$. It turns out that the latter case is more singular.
In Section \ref{sec3}, we study the case that the piston rushes to the gas and construct its integral weak solution first. Then by passing to the limit in the vague topology of measures, we find a limiting solution containing a weighted Dirac measure supported on the piston for the second case.  This also justified the concept of measure solutions we proposed.  In Section \ref{sec4}, we study the case that the piston recedes from the gas and construct self-similar solutions containing rarefaction waves. We find that the solutions converge  weakly as measures to that of the pressureless Euler flows containing a contact discontinuity and vacuum, for both cases.

\section{High Mach number limit piston problem and main results}\label{sec2}
In this section we formulate the high Mach number limit piston problem. Firstly by invariance of the compressible Euler equations under Galilean transforms, we turn to a coordinate system that moves with the piston. Then by change of independent and dependent variables, we show the piston problem depends only on two parameters, namely, $\gamma$, the adiabatic exponent of polytropic gas, and $E_0$, the (re-scaled) total energy per unit mass of the gas which is initially ahead of the piston. There are two cases for the high Mach number limit: (i) $E_0\to1/2$ for fixed $\gamma>1$; (ii) $\gamma\to 1$ for fixed $E_0>1/2$. Then we present a definition of measure solutions to the problems, and the main theorems of this paper. The proofs are given in the following two sections.

\subsection{The piston problem}\label{sec21}
The one-dimensional unsteady full compressible Euler system consists of the following conservation of mass, momentum, and energy:
\begin{equation}\label{eqeuler}
\begin{cases}
\displaystyle \del_t\rho+ \del_x(\rho u)=0,\\[8pt]
\displaystyle \del_t(\rho u)+\del_x(\rho u^2+p)=0,\\[8pt]
\displaystyle \del_t(\rho E)+\del_x(\rho u E+up)= 0.
\end{cases}
\end{equation}
Here $t\ge0$ represents time, $x\in\R$ is the space variable. The unknowns $\rho$,  $p$, $u$ and $E=\frac{1}{2}u^2+e$  are respectively the density of mass, scalar pressure, velocity, and total energy per unit mass of the polytropic gas, with
\begin{equation}\label{eqe}
e=\frac{1}{\gamma-1}\frac{p}{\rho}
\end{equation}
being the internal energy, which is proportional to the temperature of the gas.
The local sound speed of the gas is
\begin{equation}\label{eqc}
  c=\sqrt{\frac{\gamma p}{\rho}}=\sqrt{\gamma(\gamma-1)e}.
\end{equation}

Suppose initially the piston lies at $x=0$, and the gas fills the space $\{x\le0\}$. We assume the gas is static and uniform, with given state
\begin{equation}\label{eqid}
  U_\infty=(\rho,u,p)|_{t=0}=(\rho_\infty, 0, p_\infty),
\end{equation}
and the piston moves with a given constant speed $V_\infty$. Then the space-time domain we consider is
\begin{equation}\label{eqd1}
\Omega=\{(t,x): x<V_\infty t,\ t>0\}.
\end{equation}
On the trajectory of the piston
$$P=\{(t,x): x=V_\infty t,\ t\ge0\},$$
we impose the impermeable condition
\begin{equation}\label{eqbc}
u(t,x)=V_\infty \quad {\rm on}~P.
\end{equation}
Problem \eqref{eqeuler}\eqref{eqid}\eqref{eqbc} in the domain $\Omega$ is called {\it Problem (A)} below.

We define the Mach number $M_\infty$ of the piston with respect to the gas is
\begin{equation}\label{eqm1}
M_\infty=\frac{|V_\infty|}{c_\infty},
\end{equation}
with $c_\infty=\sqrt{\gamma p_\infty/\rho_\infty}=\sqrt{\gamma(\gamma-1)e_\infty}$.
The purpose of this paper is to answer the following question:

\medskip
\textit{Question:\ \
How to formulate the piston Problem (A) in the case that $M_\infty=\infty$ and construct a solution to it?}
\medskip

For the convenience of treating this question, we firstly shift the coordinates to move with the piston.
Under the following  Galilean transformation:
\begin{eqnarray*}
  &&t'=t,\ \ ~x'=x-V_\infty t,\\
  &&\rho'(t',x')=\rho(t',x'+V_\infty t'),\\
  &&u'(t',x')=u(t',x'+V_\infty t')-V_\infty,\\
  &&e'(t',x')=e(t',x'+V_\infty t'),\\
   &&p'(t',x')=p(t',x'+V_\infty t'),
\end{eqnarray*}
the equations \eqref{eqeuler} are invariant, while the domain $\Omega$ is reduced to a quarter plane
$
  \Omega'=\{(t',x'):~x'<0,~\  t'>0\},
$
and trajectory of the piston is
$$P'=\{x'=0,\  t'\ge0\}.$$

{\it For convenience of statement, we hereafter consider Problem (A) in this new coordinates and drop all the primes $``'"$ without confusion.} So the domain we consider becomes
\begin{equation}\label{eqd2}
  \Omega=\{(t,x):~x<0,~ t>0\}.
\end{equation}
The initial data is
\begin{equation}\label{eqid2}
  U_\infty=(\rho,u,E)|_{t=0}=(\rho_\infty, -V_\infty,E_\infty),
\end{equation}
and the boundary condition is
\begin{equation}\label{eqbc2}
u(t,x)=0 \quad {\rm on}~P.
\end{equation}

To understand the essence of $M_\infty\to\infty$, we carry out the following non-dimensional transformations of independent and dependent variables:
\begin{equation}
\begin{array}{cl}\label{eqT1}
  &\begin{array}{l}
   \displaystyle\tilde t=\frac{t}{T^\flat},\qquad \tx=\frac{x}{L^\flat};\\
\displaystyle\tr =\frac{\rho}{\rho_\infty},\ \ \ \ \ \ \tu =\frac{u}{|V_\infty|}, \ \
 \tp =\frac{p}{\rho_\infty V_\infty^2}, \ \ \ \tilde e =\frac{e}{V_\infty^2},
  \end{array}
\end{array}
\end{equation}
where $T^\flat$ and $L^\flat>0$ are constants with $L^\flat/T^\flat=|V_\infty|$.
Direct calculations show that
$\tilde{\rho} ,\tilde{u} ,\tilde{p} $, $\tilde{e} $ still solve
\eqref{eqeuler}, and the domain $\Omega$, boundary $P$, as well as boundary condition \eqref{eqbc2}, are invariant. However,
the initial datum become to be:
\begin{equation*}
\begin{split}
   &\tr _\infty=1,\quad \tu _\infty=\pm1, \quad  \tilde p _\infty=\frac{p_\infty}{\rho_\infty V_\infty^2}
  =\frac{1}{\gamma M_\infty^2}, \\
   &\tilde{e} _\infty=\frac{e_\infty}{V_\infty^2}= \frac{1}{\gamma-1}\frac{p_\infty}{\rho_\infty}\frac{1}{V_\infty^2}=\frac{1}{\gamma(\gamma-1)M_\infty^2},\\
   &\tilde{E}_\infty=\frac{\tu _\infty^2}{2}+\tilde{e} _\infty=\frac{1}{2}+\frac{1}{\gamma(\gamma-1)M_\infty^2}.
\end{split}
\end{equation*}%
The case $\tu _\infty=1$ means the piston rushes into the gas, while for $\tu _\infty=-1$ the piston recedes from the gas. We note that the piston problem is determined only by the parameters $\gamma$ and $M_\infty$, or equivalently, $\gamma$ and $\tilde{E}_\infty$.

{\it For simplicity of writing, in the following we still write $(\tilde{t},\tilde{x})$ as $(t,x)$, and $\tilde{U}=(\tilde{\rho}, \tilde{u}, \tilde{E})$ as $U=({\rho}, {u}, {E})$.}

In conclusion, the re-scaled piston Problem (A) consists of the Euler equations \eqref{eqeuler} defined in the domain $\Omega$, with boundary condition \eqref{eqbc2} on $P$, and
initial data
\begin{equation}\label{eqidrush}
  U_0=(\rho,u,E)(0,x)=(1,1,E_0),\quad x<0
\end{equation}
for the case the piston moves toward the gas, while
\begin{equation}\label{eqidrec}
  U_0=(\rho,u,E)(0,x)=(1,-1,E_0),\quad x<0
\end{equation}
if the piston moves backward from the gas.
By definition of $E$ and \eqref{eqe}, we have
\begin{equation}\label{eqpE}
p=(\gamma-1)\rho(E-\frac12u^2),
\end{equation}
which is considered as the state function of our problem. Henceforth we call \eqref{eqeuler}\eqref{eqbc2}\eqref{eqidrush}\eqref{eqidrec}\eqref{eqpE} as {\it Problem (B)}.

\subsection{High Mach number limit}\label{sec22}
From \eqref{eqpE} and definition of sound speed $c$,  we have
\begin{equation}\label{eqE0}
  \frac{1}{M_0^2}=\gamma(\gamma-1)(E_0-\frac{1}{2}),
\end{equation}
which implies that \begin{enumerate}
              \item[{\bf Case 1}] For fixed $\gamma>1$, the high Mach number limit $M_0\to\infty$ is equivalent to the total energy $E_0\to1/2$, or the temperature of the gas decreases to zero;
              \item[{\bf Case 2}] For fixed initial total energy $E_0>1/2$, $M_0\to\infty$ is equivalent to $\gamma\to1$.
            \end{enumerate}
For both cases, $M_0\to\infty$ implies the pressure $p_0$ of the gas which is initially ahead of the piston vanishes. However, we will see that the high Mach number limit is not always the vanishing pressure limit.

\subsection{Measure solutions to Problem (B)}
We use
$$\langle m, \phi\rangle=\int_{\bar{\Omega}}\phi(t,x)m(\dd x\dd t)$$
to write the pairing between a Radon measure $m$ supported on $\bar{\Omega}$ (the closure of $\Omega$) and a test function $\phi\in C_0(\R^2)$, which is continuous on $\R^2$ with compact support. The Lebesgue measure on $\R^d$ is denoted by $\LL^d$. For two measures $\mu$ and $\nu$,  $\mu\ll\nu$ means  $\mu$ is absolute continuous with respect to $\nu$ ({\it i.e.}, $\mu$ vanishes on $\nu$-null set). A typical Radon measure singular to the Lebesgue measure $\LL^2$ is the following Dirac measure supported on a rectifiable curve:
\begin{definition}
\label{def31}
Let $L\subset\R^2$ be a Lipschitz curve given by $\{(t,x):~x=x(t), \ \ t\in[0,T)\}$, and $w_L(t)\in L_{\mathrm{loc}}^1(0,T)$. The Dirac measure supported on $L$ with weight $w_L$ is defined by
\begin{eqnarray}\label{eq31}
\langle w_L\delta_L, \phi\rangle=\int_0^Tw_L(t)\phi(t, x(t))\sqrt{x'(t)^2+1}\,\dd t,\quad \forall \phi\in C_0(\R^2).
\end{eqnarray}
\end{definition}

Now we reformulate Problem (B) so that the unknowns may be measures, rather than functions considered before.

\begin{definition}\label{def32}
For any $E_0\ge1/2,\ \gamma\ge 1$ and $0<M_0\leq\infty$, let $\varrho, m, n, m^1, n^1,n^2, \wp$ be Radon measures supported on $\overline{\Omega}$, $w_p$ a locally integrable function on $(\R^+\cup\{0\}, \LL^1)$, and $u, E$ are $\varrho$-measurable functions. Then $(\varrho, u, E)$ is called a measure solution to the piston Problem (B),  provided that:
\begin{itemize}
\item[i)]  For any $\phi\in C_0^1(\R^2)$, there hold
\begin{eqnarray}
&&\langle \varrho, \p_t\phi\rangle+ \langle m, \p_x\phi\rangle+\int^0_{-\infty}\rho_0\phi(0,x) \dd x=0,\label{eqms1}\\
&&\langle m, \p_t\phi\rangle+ \langle n, \p_x\phi\rangle+\langle \wp, \p_x\phi\rangle-\langle
w_p\delta_{P},\phi \rangle+\int^0_{-\infty}(\rho_0u_0)\phi(0,x) \dd x=0,\label{eqms2}\\
&&\langle m^1, \p_t\phi\rangle+ \langle n^1, \p_x\phi\rangle+\langle n^2, \p_x\phi\rangle+\int^0_{-\infty}(\rho_0E_0)\phi(0,x) \dd x=0;\label{eqms3}
\end{eqnarray}

\item[ii)] $\varrho$ is nonnegative,
$m\ll \varrho$, $n\ll m$, $m^1\ll \varrho$, $n^1\ll m^1$, $n^2\ll \wp$, $\wp\ll \varrho$, and
\begin{eqnarray}\label{eq38}
u=\frac{m(\dd x\dd t)}{\varrho(\dd x\dd t)}=\frac{n(\dd x\dd t)}{m(\dd x\dd t)}=\frac{n^1(\dd x\dd t)}{m^1(\dd x\dd t)}=\frac{n^2(\dd x\dd t)}{\wp(\dd x\dd t)},\quad
E=\frac{m^1(\dd x\dd t)}{\varrho(\dd x\dd t)},
\end{eqnarray}
while $u=0, E=0$ on those sets where $\varrho=0$;
\item[iii)] If $\varrho \ll \LL^2$ with derivative $\rho(t,x)$ in a neighborhood $\O$ of $(t,x)\in\bar{\Omega}$, and $\wp \ll \LL^2$ with derivative $p(t,x)$ in $\O$, then $\LL^2$-a.e. there holds
    \begin{eqnarray}\label{eq312}
    p=(\gamma-1)\rho(E-\frac12 u^2)\qquad\text{in}\ \ \O;
    \end{eqnarray}
    In addition, the classical Lax entropy condition is valid for discontinuities of the functions $\rho, u, E$ in this situation.
\end{itemize}
\end{definition}

\begin{remark}
We may write  \eqref{eqms1}-\eqref{eqms3} formally as
\begin{eqnarray}\label{eq223}
\begin{cases}
\p_t\varrho+\p_xm=0,\\
\p_tm+\p_x(n+\wp)=-w_p\delta_{P},\\
\p_tm^1+\p_x(n^1+n^2)=0.
\end{cases}\end{eqnarray}
Note that to study boundary-value problems, the momentum equation, $i.e.$, $\eqref{eq223}_2$, contains a Dirac measure on the right hand side.
\end{remark}

\begin{definition}\label{def23}
Let $\{(\varrho_k, u_k, E_k)\}_{k=1}^\infty$ and $(\varrho, u, E)$  be measure solutions, with corresponding Radon measures $\varrho_k, m_k,  n_k, $ $m^1_k,  n^1_k, n^2_k, \wp_k$ and
$\varrho, m,  n,  m^1,  n^1, n^2, \wp$, and weights ${w_p}^k, w_p$. We say that $\{(\varrho_k, u_k, E_k)\}_{k=1}^\infty$ converges weakly as measures to $(\varrho, u, E)$  for $k\to \infty$,  if the measures $\varrho_k, m_k,  n_k,  m^1_k,  n^1_k, n^2_k, \wp_k$ converge weakly to $\varrho, m, $ $ n, $ $ m^1, $  $ n^1, $ $n^2, $$\wp$ correspondingly, and $w_p^k\to w_p$ locally in $L^1(\R^+\cup\{0\}),$ as $k\to\infty.$
\end{definition}

We can now state the main results of this paper.

\begin{theorem}\label{thm41}
For the piston moving toward the gas, Problem (B) has measure solutions for each Mach number $0<M_0\le\infty$ in the sense of Definition \ref{def32}.
\begin{itemize}
\item[\underline{Case 1}] For fixed $\gamma>1$, these solutions converge to \eqref{eqlimit1} as $E_0\to1/2$ in the sense of weak convergence of measures;

\item[\underline{Case 2}] For fixed $E_0>1/2$ and $\gamma\to 1$, these solutions converge weakly as measures to a singular measure solution with density containing a weighted Dirac measure on the piston, given by  \eqref{eqms}.
\end{itemize}
\end{theorem}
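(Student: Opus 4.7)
The plan is to exploit the classical self-similar shock structure of the compressible piston problem, write the limits explicitly, and then pass to the limit in the measure-theoretic sense of Definition \ref{def32}. I would proceed in four steps.

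First, for any finite $0<M_0<\infty$ I would construct the standard self-similar integral weak solution explicitly as a two-state configuration separated by a single admissible shock. Ahead of the shock (at $x<\sigma t$) the gas retains the initial state $(1,1,E_0)$, behind the shock (at $\sigma t<x<0$) the boundary condition forces the constant state $(\rho_*,0,E_*)$. The Rankine--Hugoniot conditions together with $u_*=0$ reduce to
\begin{equation*}
\sigma(\rho_*-1)=-1,\qquad p_*=1+p_0-\sigma,\qquad \sigma(\rho_*E_*-E_0)=-(E_0+p_0),
\end{equation*}
which, combined with the state relation \eqref{eqpE} and $p_0=(\gamma-1)(E_0-1/2)$, determine $(\sigma,\rho_*,p_*,E_*)$ by an explicit quadratic in $r=\rho_*-1$. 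I would select the positive root, verify $\sigma<0$ and the Lax condition, and interpret the resulting piecewise constant function as an absolutely continuous measure with $\wp=p\LL^2$, $m=(\rho u)\LL^2$, etc. The weight on the piston is simply $w_p(t)=p_*$. This confirms item (i) of the theorem for every finite $M_0$.

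For Case 1 I would fix $\gamma>1$ and let $E_0\downarrow 1/2$ (so $p_0\downarrow 0$). The quadratic for $r$ has coefficients that stay bounded and bounded away from degeneracy, so $\sigma,\rho_*,p_*,E_*$ converge to finite limits $\sigma^\infty,\rho_*^\infty,p_*^\infty,E_*^\infty$ satisfying the same Rankine--Hugoniot jumps with $p_0=0$ on the upstream side. Since all quantities remain bounded and the profile is piecewise constant, the measures $\varrho_k,m_k,n_k,m_k^1,n_k^1,n_k^2,\wp_k$ converge in the vague topology by dominated convergence applied separately on each self-similar wedge, and $w_p^k\to w_p^\infty=p_*^\infty$ locally in $L^1$. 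The limit is exactly the two-constant-state profile \eqref{eqlimit1}, proving Case 1.

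Case 2 is where the real work lies. I would fix $E_0>1/2$ and let $\gamma\downarrow 1$. Then the leading coefficient of the quadratic for $r$ is $\gamma p_0+(\gamma-1)/2\to 0$, so the admissible positive root satisfies $r=\rho_*-1\to\infty$, whence $\sigma=-1/r\to 0^-$; meanwhile $p_*=1+p_0-\sigma\to 1$. The crucial balance is
\begin{equation*}
\rho_*\,|\sigma|\,t=(1+r)\cdot\tfrac{1}{r}\cdot t\longrightarrow t,
\end{equation*}
so the mass sitting in the shrinking layer $(\sigma t,0)$ collapses onto $P=\{x=0\}$ as a Dirac measure with weight $w_\varrho(t)=t$ (using $\sqrt{x'(t)^2+1}=1$ in Definition \ref{def31}). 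The outer region expands to fill $\Omega$ with the constant state $(1,1,E_0)$, for which $p=(\gamma-1)(E_0-1/2)\to 0$, so $\wp$ has no absolutely continuous part in the limit. For each test function $\phi\in C_0(\R^2)$ I would split the pairings $\langle\varrho_k,\phi\rangle$, $\langle m_k,\phi\rangle$, $\langle n_k,\phi\rangle$, $\langle\wp_k,\phi\rangle$, etc., into the two wedges, use continuity of $\phi$ near the piston (so $\phi(t,x)\simeq\phi(t,0)$ on the thin layer), and pass to the limit, identifying the concentrated weights on $P$. On the piston $u=0$, so despite the density concentration, $m,n,n^1,n^2$ carry no Dirac part; only $\varrho$ and $m^1$ pick up weights (the energy mass concentrates with weight $E_*\rho_*|\sigma|t$, which I would compute using the energy RH relation). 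Finally $w_p^k=p_*\to 1$, and I would verify the three weak identities \eqref{eqms1}--\eqref{eqms3} for the limit object, which yields precisely the singular measure solution \eqref{eqms}.

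The main obstacle is the bookkeeping in Case 2: correctly identifying which of the seven measures in Definition \ref{def32} acquire a singular part on $P$ and with what weights, ensuring the compatibilities $m\ll\varrho$, $n\ll m$, $n^2\ll\wp$, etc., remain valid under the limit, and checking that the momentum equation holds with the boundary concentration $-w_p\delta_P$ absorbing the residual force term. Everything else reduces to the explicit shock computation above and routine weak convergence of piecewise constant profiles.
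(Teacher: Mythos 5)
Your proposal is correct and follows essentially the same route as the paper: the explicit Rankine--Hugoniot/quadratic construction of the self-similar shock solution for finite $M_0$, the observation that $\rho_1\to\infty$, $\sigma\to 0^-$ with $\rho_1|\sigma|\to 1$ and $p_1\to 1$ in Case 2, and the resulting concentration of $\varrho$ and $m^1$ (with weights $t$ and $E_0 t$) on $P$ while $m,n,n^1,n^2,\wp$ acquire no singular part. The identification of the limit \eqref{eqlimit1} in Case 1 and of \eqref{eqms} in Case 2 via wedge-by-wedge passage to the limit in the self-similar variable matches the paper's argument.
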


\begin{theorem}\label{thm51}
For the piston receding from the gas, for fixed $0<M_0<\infty$, Problem (B) has an integral weak solution consisting of rarefaction waves and constant states. Vacuum occurs ahead of the piston if $M_0>{2}/({\gamma-1})$. Otherwise, there is no vacuum.
\begin{itemize}
\item[\underline{Case 1}] If $\gamma>1$ is fixed, the high Mach number limit ($E_0=1/2$) corresponds to the pressureless Euler flow given by \eqref{eq46} with vacuum, and vacuum occur even for large $M_0$ (small $E_0-1/2$);

\item[\underline{Case 2}] If  $E_0>1/2$ is fixed, the high Mach number limit ($\gamma=1$) corresponds to the pressureless Euler flow given by \eqref{eq413} with vacuum, but there is no vacuum for large $M_0$ (or small but nonzero $\gamma-1$).
\end{itemize}
For both cases, the sequence of integral weak solutions converge weakly to the limiting solutions in the sense of measures.
\end{theorem}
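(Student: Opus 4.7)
The plan is to proceed in four stages: construct the exact self-similar integral weak solution for finite $M_0$, identify the vacuum threshold, carry out the explicit high Mach number limit in each case, and verify weak convergence in the sense of measures.

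For the first stage, I would seek a self-similar solution $U=U(\xi)$ with $\xi=x/t$ consisting of a left constant state $U_0=(1,-1,E_0)$, a $1$-rarefaction wave associated with the characteristic family $\lambda=u-c$, and an intermediate state attached to the piston. Using the Riemann invariant $u+2c/(\gamma-1)$ conserved through the $1$-rarefaction together with the isentropic relation $p/\rho^\gamma=p_0/\rho_0^\gamma$, and imposing the boundary condition $u=0$ on $P$, I would solve for the tail sound speed $c_*=c_0-(\gamma-1)/2$, where $c_0=1/M_0=\sqrt{\gamma(\gamma-1)(E_0-1/2)}$. Inside the rarefaction fan I would obtain the explicit formulas $u(\xi)=(2\xi-(\gamma-1)+2c_0)/(\gamma+1)$ and $c(\xi)=((\gamma-1)(-1-\xi)+2c_0)/(\gamma+1)$, and then $\rho(\xi)=(c(\xi)/c_0)^{2/(\gamma-1)}$. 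When $c_*>0$ the rarefaction terminates at $\xi=-c_*$ and a constant state $(\rho_*,0,E_*)$ with $\rho_*=(c_*/c_0)^{2/(\gamma-1)}$ fills $-c_*\le\xi\le 0$. When $c_*\le 0$, i.e.\ $M_0\ge 2/(\gamma-1)$, the fan instead runs until the vacuum point $\xi_v=-1+2c_0/(\gamma-1)$ at which $c=0$, and vacuum occupies $\xi_v\le\xi\le 0$. The Rankine--Hugoniot relations reduce to continuity of state at the head $\xi=-1-c_0$ and the tail (or vacuum boundary), and the Lax condition is automatic for a rarefaction.

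For the limit analysis, I would take each Case in turn. In Case 1 ($\gamma>1$ fixed, $E_0\to 1/2$) one has $c_0\to 0$ and the threshold $M_0>2/(\gamma-1)$ holds eventually, so vacuum is present; both the head $-1-c_0$ and the vacuum boundary $-1+2c_0/(\gamma-1)$ tend to $-1$, so the fan collapses and one is left with the constant state $(1,-1,1/2)$ for $x<-t$ and vacuum for $-t<x<0$. In Case 2 ($E_0>1/2$ fixed, $\gamma\to 1$) one uses $c_0/(\gamma-1)=\sqrt{\gamma(E_0-1/2)/(\gamma-1)}\to\infty$ to deduce $c_*>0$, giving no vacuum at finite $\gamma>1$, while $c_*\to 0$ and the head tends to $-1$. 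From the explicit fan formulas, $u(\xi)\to\xi$ and $c(\xi)\to 0$ in the open interval $(-1,0)$; the key asymptotic is $\log\rho(\xi)=(2/(\gamma-1))\log(c/c_0)\sim(-1-\xi)/c_0\to-\infty$ there, so $\rho\to 0$ pointwise in the fan, and the pressure measure vanishes uniformly.

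With these explicit asymptotics in hand, I would verify weak convergence of each measure in Definition~\ref{def32}. Because the densities are uniformly bounded on any compact subset away from $\xi=-1$, Lebesgue dominated convergence applied in the variable $\xi$ (with $dx\,dt$ written as $t\,d\xi\,dt$) gives the weak convergence of $\varrho_k,m_k,n_k,m^1_k,n^1_k,n^2_k,\wp_k$ to the limiting measures whose supports are $\{x\le -t\}$ (the uniform state) in Case 1 and Case 2; the boundary weight $w_p^k\to 0$ in $L^1_{\loc}$ because $p|_P\to 0$ (vacuum forms at the piston in the limit). Inserting the limits into \eqref{eqms1}--\eqref{eqms3} I would then check that the pressureless system with boundary condition holds, identifying the limiting flow with \eqref{eq46} (Case 1) and \eqref{eq413} (Case 2), both exhibiting a contact discontinuity at $x=-t$ between $U_0$ and vacuum.

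The main obstacle will be the asymptotics inside the fan in Case 2, where the density $\rho=(c/c_0)^{2/(\gamma-1)}$ has both a base tending to $1$ and an exponent tending to $\infty$; controlling this expression uniformly in $\xi$ and extracting the correct rate of decay (so that the resulting measures vanish against any test function) requires careful estimation of $\log(c/c_0)$, and a symmetric but simpler estimate is needed near $\xi=-c_*$ to handle the shrinking intermediate constant state. Once this asymptotic estimate is in place, the remaining convergences are routine consequences of dominated convergence and the explicit self-similar form.
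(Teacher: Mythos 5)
Your proposal is correct and follows essentially the same route as the paper: construct the self-similar solution with a $1$-rarefaction fan, identify the vacuum threshold $M_0=2/(\gamma-1)$, compute the limits of the head/tail/vacuum-boundary speeds and of $\rho_m=(c/c_0)^{2/(\gamma-1)}$ in each case, and pass to the limit in the $\xi=x/t$ variable by dominated convergence. The only difference is cosmetic — you parametrize the rarefaction by the Riemann invariant $u+2c/(\gamma-1)$ and explicit $\xi$-formulas, while the paper uses the exponential parameter $s$ from Smoller; your key asymptotic $\frac{2}{\gamma-1}\log(c/c_0)\sim(-1-\xi)/c_0\to-\infty$ in Case 2 is exactly the paper's observation that $\rho_m\to0$ whenever $u_m>-1$.
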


\section{High Mach number limit for piston rushing to gas}\label{sec3}
In this section, we prove Theorem \ref{thm41}. We first construct the well-known self-similar solutions by using Rankine-Hugoniot conditions, and then taking limit of these solutions to obtain a measure solution to the limiting situation $M_0=\infty$.

\subsection{Self-similar solutions containing a shock}\label{sec31}
Since shocks appear ahead of the piston when it moves into the gas, we need the following well-known notion of integral weak solution of Problem (B).
\begin{definition}\label{def21}
We call $(\rho,u,E)\in L^\infty(\Omega;\R^3)$ an integral weak solution to Problem (B), if for any $\phi\in C_0^1(\R^2)$, there hold
\begin{equation}\label{eqdws}
 \begin{cases}\displaystyle
     \int_\Omega (\rho\p_t\phi+\rho u \p_x\phi) \dd x\dd t+\int^0_{-\infty} \rho_0(x)\phi(0,x) \dd x=0, \\
     \begin{split}
      \int_\Omega (\rho u\p_t\phi+(\rho u^2+p)\p_x\phi) \dd x\dd t&-\int_0^\infty p(t,0)\phi(t,0) \dd t+\int^0_{-\infty} \rho_0(x)u_0(x)\phi(0,x) \dd x=0,
     \end{split}\\
      \displaystyle\int_\Omega (\rho E\p_t\phi+(\rho u E+up)\p_x\phi) \dd x\dd t+\int^0_{-\infty} \rho_0(x)E_0(x)\phi(0,x) \dd x=0,
 \end{cases}
\end{equation}
and  the Lax entropy condition is valid for any discontinuities in $\Omega$.
\end{definition}

In order to find the limiting solution, we would like to obtain the solution for fixed $M_0<\infty$ first. Noting that Problem (B) is a  Riemann problem with boundary condition \eqref{eqbc2}, one can construct a piecewise constant self-similar solution of the form
\begin{equation}\label{eqshock}
U(t,x)=V(\frac{x}{t})=\begin{cases}
V_0=(1,1,E_0),& -\infty\le\frac xt<{\sigma},\\
V_1=(\rho_1,0,E_1),& {\sigma}<\frac xt\le0,
\end{cases}\end{equation}
with $\rho_1>1$ to fulfill the entropy condition. Then \eqref{eqdws} is reduced to the following Rankine-Hugoniot conditions:
\begin{equation}\label{eqrh}
  \begin{cases}
     \sigma (\rho_1-\rho_0)=\rho_1 u_1-\rho_0u_0, \\
        \sigma(\rho_1u_1-\rho_0u_0)=\rho_1 u_1^2+p_1-\rho_0u_0^2-p_0, \\
       \sigma(\rho_1E_1-\rho_0E_0)=\rho_1 u_1E_1+u_1p_1-\rho_0u_0E_0-u_0p_0.\\
  \end{cases}
\end{equation}
In view of  $\rho_0=1$, $u_0=1$, $u_1=0$, it follows from $\eqref{eqrh}_1$ that the speed of shock-front is
\begin{equation}\label{eqsigma}
  \sigma=-\frac{1}{\rho_1-1}.
\end{equation}
Inserting it into $\eqref{eqrh}_2$ gives
\begin{equation}\label{eqp1}
  p_1=p_0+1+\frac{1}{\rho_1-1}.
\end{equation}
Substituting \eqref{eqsigma} into $\eqref{eqrh}_3$ yields
\begin{equation}\label{eqE1}
  E_1=\frac{1}{\rho_1}((\rho_1-1)(E_0+p_0)+E_0).
\end{equation}
Since $p_1=(\gamma-1)\rho_1(E_1-\frac{1}{2}u_1^2)$ and $u_1=0$, we  have
$$ p_0+1+\frac{1}{\rho_1-1}=(\gamma-1)((\rho_1-1)(E_0+p_0)+E_0), $$
or equivalently,
$$ (\gamma-1)(E_0+p_0)(\rho_1-1)^2+((\gamma-1)E_0-p_0-1)(\rho_1-1)-1=0. $$
In view of $\rho_1>\rho_0=1$, it follows that
\begin{equation}\label{eqrho1}
  \rho_1=1+\frac{p_0+1-(\gamma-1) E_0+\sqrt{((\gamma-1) E_0-p_0-1)^2+4(\gamma-1)(E_0+p_0)}}{2(\gamma-1)( E_0+p_0)}.
\end{equation}

We thus obtain an integral weak solution to Problem (B) of the form \eqref{eqshock}, with $\rho_1$, $\sigma$ and $E_1$ given by \eqref{eqrho1}\eqref{eqsigma} and \eqref{eqE1} respectively. These solutions depend on the Mach number of the piston $M_0$ as mentioned in \eqref{eqE0}.
For $0<M_0<\infty$, by Definition \ref{def21}, one can easily check
that
$\varrho=\rho(t,x;M_0)\LL^2, u=u(t,x;M_0), E=E_0(t,x;M_0)$, with  $(\rho,u,E)$ the solution constructed in \eqref{eqshock}, is a measure solution to  Problem (B), just by  taking
\begin{eqnarray}\label{eq313}\begin{cases}
\varrho=\rho(t,x;M_0) \LL^2,\quad m=\rho u(t,x;M_0)\LL^2,\\ n=\rho u^2(t,x;M_0) \LL^2,\quad
m^1=\rho E(t,x;M_0) \LL^2,\\
n^1=\rho uE(t,x;M_0)\LL^2,\quad n^2=u p(t,x;M_0)\LL^2,\\
\wp=p(t,x;M_0)\LL^2,\quad w_p=w_p(t;M_0)=p_1.
\end{cases}\end{eqnarray}

In the following we consider the high Mach number limit  $M_0= \infty$ for Problem (B). As shown in Section \ref{sec22}, there are two cases.

\subsection{Case 1: Fix $\gamma>1$ and $M_0\to \infty$}

\begin{lemma}\label{lem22}
For fixed $\gamma>1$, as functions of $M_0$, all $p_0$, $\rho_1, \sigma, e_1, E_1, p_1$ are $C^2$ with respect to $M_0$, and
\begin{equation}\label{eqlimit}
  \begin{split}
      &\lim_{M_0\to\infty }p_0 =0,\quad \lim_{M_0\to\infty }E_0 =\frac12, \quad \lim_{M_0\to\infty }\rho_1 =\frac{\gamma+1}{\gamma-1},\\
& \lim_{M_0\to\infty }p_1 =\frac{\gamma+1}{2},\quad \lim_{M_0\to\infty }E_1 =\frac{1}{2},\quad \lim_{M_0\to\infty }\sigma =\frac{1-\gamma}{2}.
  \end{split}
\end{equation}
\end{lemma}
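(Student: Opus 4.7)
The plan is to treat everything as explicit, smooth functions of $M_0$, verify non-degeneracy of the denominators and radicand that appear, and then pass to the limit by direct substitution.

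First I would record the starting identities from Section \ref{sec22}: $p_0 = 1/(\gamma M_0^2)$ and $E_0 = 1/2 + 1/(\gamma(\gamma-1) M_0^2)$. Both are rational in $1/M_0^2$ and hence $C^\infty$ on $(0,\infty)$, with the obvious limits $p_0 \to 0$ and $E_0 \to 1/2$ as $M_0 \to \infty$. This takes care of the first two limits in \eqref{eqlimit} and the smoothness of $p_0,E_0$.

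Next I would analyze formula \eqref{eqrho1}. Setting $A := ((\gamma-1)E_0 - p_0 - 1)^2 + 4(\gamma-1)(E_0+p_0)$ and $D := 2(\gamma-1)(E_0+p_0)$, the map $(p_0,E_0) \mapsto \rho_1$ is $C^\infty$ as long as $A > 0$ and $D \ne 0$. Substituting the limiting values $p_0 = 0$, $E_0 = 1/2$ gives $A \to (\gamma-3)^2/4 + 2(\gamma-1) = (\gamma+1)^2/4 > 0$ and $D \to \gamma-1 > 0$. By continuity these inequalities persist for all $M_0$ sufficiently large, so $\rho_1$ is $C^\infty$ (in particular $C^2$) in $M_0$ on an interval of the form $(M^\ast,\infty)$, and the limit is obtained by direct substitution: the numerator tends to $(3-\gamma)/2 + (\gamma+1)/2 = 2$, and dividing by $\gamma-1$ yields $\rho_1 \to 1 + 2/(\gamma-1) = (\gamma+1)/(\gamma-1)$.

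Having $\rho_1$ smooth with $\rho_1 - 1 \to 2/(\gamma-1) > 0$, I would then read off the remaining quantities from \eqref{eqsigma}, \eqref{eqp1}, \eqref{eqE1}, and $e_1 = p_1/((\gamma-1)\rho_1)$. Each is a smooth function of $(\rho_1,p_0,E_0)$ on the relevant open set (no division by zero), hence $C^2$ in $M_0$ for large $M_0$. The limits then drop out mechanically: $\sigma \to -(\gamma-1)/2$, $p_1 \to 0 + 1 + (\gamma-1)/2 = (\gamma+1)/2$, and
\[
E_1 \to \frac{(2/(\gamma-1))\cdot(1/2) + 1/2}{(\gamma+1)/(\gamma-1)} = \frac{(\gamma+1)/(2(\gamma-1))}{(\gamma+1)/(\gamma-1)} = \frac12.
\]
The only step requiring genuine attention is the positivity of the radicand $A$ near the limit; once that is established, the rest is smooth calculus of elementary functions and the regularity and limit claims follow simultaneously.
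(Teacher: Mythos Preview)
Your proposal is correct and follows essentially the same route as the paper: express $p_0,E_0$ explicitly in $M_0$, substitute the limiting values into \eqref{eqrho1} to get $\rho_1\to(\gamma+1)/(\gamma-1)$, and then read off $\sigma,p_1,E_1$ from \eqref{eqsigma}, \eqref{eqp1}, \eqref{eqE1}. You are in fact more careful than the paper about the $C^2$ claim (the paper does not discuss it at all); note, though, that since $A$ is a square plus a strictly positive term and $D>0$ for every $M_0>0$, your smoothness argument actually works on all of $(0,\infty)$, not just for large $M_0$.
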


\begin{proof}
Since $p_0={1}/{(\gamma M_0^2)}$, we have $\lim_{M_0\to\infty }p_0 =0$.  That  $\lim_{M_0\to\infty }E_0 =1/2$  follows from \eqref{eqE0}.
Inserting these into \eqref{eqrho1}, one gets
$$ \lim_{M_0\to\infty }\rho_1 =1+\frac{1-\frac{\gamma-1}{2}+\sqrt{(\frac{\gamma-3}{2})^2 +2(\gamma-1)}}{\gamma-1} 
=\frac{\gamma+1}{\gamma-1}. $$
Then by \eqref{eqp1},
\begin{eqnarray*}
&&\lim_{M_0\to\infty }p_1 =\lim_{M_0\to\infty }p_0+1+\lim_{M_0\to\infty }\frac{1}{\rho_1-1}=\frac{\gamma+1}{2},\\
&&\lim_{M_0\to\infty }E_1 =\lim_{M_0\to\infty }\frac{p_1}{(\gamma-1)\rho_1}=\frac{1}{2},\\
&&\lim_{M_0\to\infty }\sigma =\lim_{M_0\to\infty }\frac{-1}{\rho_1-1}=\frac{1-\gamma}{2}.
\end{eqnarray*}
This completes proof of the lemma.
\end{proof}

It is easy to verify that
\begin{equation}\label{eqlimit1}
  U(t,x;\infty)=(\rho,u,E)(t,x;\infty)=\begin{cases}\displaystyle
           (1,1,\frac12),&\displaystyle-\infty<\frac{x}{t}<\frac{1-\gamma}{2},\\ \displaystyle (\frac{\gamma+1}{\gamma-1},0,\frac12),&\displaystyle \frac{1-\gamma}{2}<\frac{x}{t}<0
         \end{cases}
\end{equation}
is an integral weak solution of Problem (B) when $E_0=1/2$. Let $\varrho=\rho \LL^2$, then obviously, $(\varrho,u,E)$ satisfies Definition \ref{def32} and thus is a measure solution. We note that there is no concentration of mass for this case.

Recall that we denote by $(\varrho, u, E)(t,x;M_0)$ the measure solution for given $M_0$  obtained above.
That $(\varrho, u, E)(t,x;M_0)$ converge weakly as measures to  $(\varrho, u, E)(t,x;\infty)$ follows from the fact that
for any $\phi\in C_0(\R)$, there hold, as $M_0\to\infty$, with $\eta=x/t$, that
\begin{eqnarray*}
&&\int_{-\infty}^{0}F(V(\eta;M_0))\phi(\eta)\dd \eta= F(V_0(
  M_0))\int_{-\infty}^{\sigma(M_0)}\phi(\eta)\dd\eta+ F(V_1(
  M_0))\int_{\sigma(M_0)}^{0}\phi(\eta)\dd\eta\\
  &\to&  F(V_0(
  \infty))\int_{-\infty}^{\sigma(\infty)}\phi(\eta)\dd\eta+ F(V_1(
  \infty))\int_{\sigma(\infty)}^{0}\phi(\eta)\dd\eta,
  \end{eqnarray*}
and
  \begin{eqnarray*}
  &&\int_{-\infty}^{0}G(V(\eta;M_0))\phi(\eta)\dd \eta= G(V_0(\eta; M_0))\int_{-\infty}^{\sigma(M_0)}\phi(\eta)\dd\eta+ G(V_1(
   M_0))\int_{\sigma(M_0)}^{0}\phi(\eta)\dd\eta\\
  &\to&  G(V_0(\eta; \infty))\int_{-\infty}^{\sigma(\infty)}\phi(\eta)\dd\eta+ G(V_1(
  \infty))\int_{\sigma(\infty)}^{0}\phi(\eta)\dd\eta.
\end{eqnarray*}
Here $\sigma(\infty)=\lim_{M_0\to\infty} \sigma(M_0)={(1-\gamma)}/{2}$, and
\begin{eqnarray*}
&&F(V_0(\infty)) =\lim_{M_0\to\infty}(\rho_0, \rho_0u_0, \rho_0E_0)=(1,1,1/2),\\  &&F(V_1(\infty))=\lim_{M_0\to\infty}(\rho_1, \rho_1u_1, \rho_1E_1)=(\frac{\gamma+1}{\gamma-1},0, \frac{\gamma+1}{2(\gamma-1)}),  \\ &&G(V_0(\infty))=\lim_{M_0\to\infty}(\rho_0u_0, \rho_0u_0^2+p_0, \rho_0u_0E_0+u_0p_0)=(1,1,1/2),\\ &&G(V_1(\infty))=\lim_{M_0\to\infty}(\rho_1u_1, \rho_1u_1^2+p_1, \rho_1u_1E_1+u_1p_1)=(0,\frac{\gamma+1}{2},0).
\end{eqnarray*}
Then return to $(t,x)$ variables, similar to the proof of \eqref{eq43add} below,  by Definition \ref{def23}, we justified consistency of Definition \ref{def32} and proved \underline{Case 1} in Theorem \ref{thm41}.

We note that although $p_0\to0$ as $M_0\to\infty$, the limit of $p_1$ is not $0$. Thus the high Mach number limit is not the vanishing pressure limit for this case.

\subsection{Case 2: Fix $E_0>1/2$ and $M_0\to\infty$}

\begin{lemma}\label{lem22}
For fixed $E_0>1/2$,  all $p_0$, $\rho_1, \sigma, e_1, E_1, p_1$, as functions of $M_0$ (or equivalently $\gamma\ge1$), are $C^2$, and
\begin{equation}\label{eqlimit}
  \begin{split}
      &\lim_{M_0\to\infty }p_0 =0,\quad \lim_{M_0\to\infty }\rho_1 =\infty,\quad \lim_{M_0\to\infty }p_1 =1,\\
& \lim_{M_0\to\infty }E_1 =E_0,\quad \lim_{M_0\to\infty }\sigma =0,\quad \lim_{M_0\to\infty}\rho_1\sigma=-1.
  \end{split}
\end{equation}
\end{lemma}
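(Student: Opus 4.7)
The plan is to exploit the explicit formulas \eqref{eqE0}, \eqref{eqsigma}, \eqref{eqp1}, \eqref{eqE1}, \eqref{eqrho1} already derived in Section \ref{sec31} and specialize them to fixed $E_0 > 1/2$ with $\gamma \downarrow 1$. The key observation is that by \eqref{eqE0}, for fixed $E_0 > 1/2$ the map $\gamma \mapsto M_0 = 1/\sqrt{\gamma(\gamma-1)(E_0-1/2)}$ is a smooth strictly decreasing diffeomorphism from $(1, \infty)$ onto $(0, \infty)$, so $M_0 \to \infty$ is equivalent to $\gamma \to 1^+$, and regularity in $M_0$ is equivalent to regularity in $\gamma$.

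First I would establish the $C^2$ regularity. From $p_0 = 1/(\gamma M_0^2) = (\gamma-1)(E_0 - 1/2)$, $p_0$ is polynomial (hence $C^\infty$) in $\gamma$, and the discriminant
\begin{equation*}
D(\gamma) := ((\gamma-1)E_0 - p_0 - 1)^2 + 4(\gamma-1)(E_0 + p_0)
\end{equation*}
satisfies $D(1) = 1 > 0$, so $\sqrt{D(\gamma)}$ is $C^\infty$ in a neighborhood of $\gamma = 1$. Thus \eqref{eqrho1} shows $\rho_1$ is $C^\infty$ in $\gamma$ on $(1, 1+\epsilon)$, and the remaining quantities $\sigma, p_1, E_1, e_1$ are smooth functions of $\rho_1, p_0, E_0, \gamma$ by their explicit formulas.

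Next, the limits. From $p_0 = (\gamma-1)(E_0-1/2) \to 0$ as $\gamma \to 1$, the first limit is immediate. For $\rho_1$, plug into \eqref{eqrho1}: the numerator tends to $0 + 1 - 0 + \sqrt{1 + 0} = 2$ while the denominator $2(\gamma-1)(E_0+p_0) \to 0^+$, giving $\rho_1 \to \infty$ and more precisely $\rho_1 - 1 \sim 1/[(\gamma-1)E_0]$. The formula \eqref{eqsigma} then gives $\sigma = -1/(\rho_1 - 1) \to 0$, and the algebraic identity
\begin{equation*}
\rho_1 \sigma = -\frac{\rho_1}{\rho_1-1} = -1 - \frac{1}{\rho_1-1}
\end{equation*}
yields $\rho_1\sigma \to -1$. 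Similarly \eqref{eqp1} gives $p_1 = p_0 + 1 + 1/(\rho_1-1) \to 0 + 1 + 0 = 1$. Finally, rewriting \eqref{eqE1} as
\begin{equation*}
E_1 = \Bigl(1 - \frac{1}{\rho_1}\Bigr)(E_0 + p_0) + \frac{E_0}{\rho_1}
\end{equation*}
and using $1/\rho_1 \to 0$, $p_0 \to 0$ gives $E_1 \to E_0$.

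The main obstacle, while mild, is the asymptotic bookkeeping for $\rho_1$: because the denominator in \eqref{eqrho1} vanishes while the numerator tends to a positive constant, one must verify that the limit of the square root is indeed $1$ (not $-1$ or something that would cancel the $p_0 + 1 - (\gamma-1)E_0$ term) so that the leading behavior $\rho_1 \sim 1/[(\gamma-1)E_0]$ is correctly captured. Once this is in hand, every other limit reduces to an algebraic manipulation via the identity $\rho_1\sigma + 1 = \sigma$ and the explicit expressions for $p_1$ and $E_1$.
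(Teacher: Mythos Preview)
Your proof is correct and follows essentially the same route as the paper's: use the explicit formulas \eqref{eqsigma}--\eqref{eqrho1} together with the equivalence $M_0\to\infty \Leftrightarrow \gamma\to 1$ from \eqref{eqE0}, and compute each limit directly. You actually go slightly further than the paper by justifying the $C^2$ regularity claim (which the paper merely asserts) and by recording the precise asymptotic $\rho_1-1\sim 1/[(\gamma-1)E_0]$, but the core argument is identical.
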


\begin{proof}
Recall that $p_0={1}/{(\gamma M_0^2)}$, one has $\lim_{M_0\to\infty }p_0 =0$.
Inserting it into \eqref{eqrho1} and by the equivalence of $M_0\to\infty$ and $\gamma\to1$, it follows that
$$ \lim_{M_0\to\infty }\rho_1 =\lim_{\gamma\to1}\rho_1=\infty. $$
Then by \eqref{eqp1}, it holds
$$\lim_{M_0\to\infty }p_1 =\lim_{M_0\to\infty }p_0+1+\lim_{M_0\to\infty }\frac{1}{\rho_1-1}=1. $$
By \eqref{eqE1},
$$\lim_{M_0\to\infty }E_1 =\lim_{M_0\to\infty }((1-\frac{1}{\rho_1})E_0+p_0+\frac{E_0}{\rho_1})=E_0.$$
From \eqref{eqsigma}, one gets
$$\lim_{M_0\to\infty }\sigma =\lim_{M_0\to\infty }\frac{-1}{\rho_1-1}=0,$$
while
$$\lim_{M_0\to\infty }\sigma \rho_1 =\lim_{M_0\to\infty }\frac{-1}{\rho_1-1}\rho_1=-1.$$
The proof of the lemma is completed.
\end{proof}

We infer from this lemma that as $M_0\to\infty$,  the speed of the shock-front converges to the speed of the piston with an error of the order $\gamma-1$, or equivalently, $1/M_0^2$. So for this high Mach number limiting case, we may guess that the shock coincides with the piston.  To characterize clearly the limit, we encounter singular measure solutions to the Euler equations.

The idea is to take weak limit of the measure solution ({\it cf.} Definition \ref{def23}) $$\varrho=\rho(t,x;M_0)\LL^2,\  u=u(t,x;M_0), \ E=E_0(t,x;M_0)$$ obtained by \eqref{eq313}. Since the solution is self-similar, we firstly work in the variable $\eta=x/t$. For any $ \phi\in C_0(\R)$, we have
\begin{equation}
\int_{-\infty}^0\rho(\eta;M_0)\phi(\eta)\dd\eta
=\rho_0(\eta;M_0)\int_{-\infty}^{\sigma(M_0)} \phi(\eta)\,\dd\eta+\rho_1(\eta;M_0) \int_{\sigma(M_0)}^{0}\phi(\eta)\dd\eta.
\end{equation}
By \eqref{eqlimit} and recall $\rho_0=1$, we have
\begin{eqnarray*}
\lim_{M_0\to\infty}\rho_0(\eta,M_0)\int_{-\infty}^{\sigma(M_0)} \phi(\eta)\dd\eta=\int_{-\infty}^0\phi(\eta)\dd\eta,
\end{eqnarray*}
and
\begin{equation*}
  \begin{split}
  \lim_{M_0\to\infty}\rho_1(\eta;M_0) \int_{\sigma(M_0)}^{0}\phi(\eta)\dd\eta
  =&\lim_{M_0\to\infty} \rho_1(\eta;M_0))(-{\sigma(M_0)})
\lim_{M_0\to\infty}\frac{1}{-\sigma(M_0)}  \int_{\sigma(M_0)}^{0}\phi(\eta)\dd\eta\\
=&\phi(0).
  \end{split}
\end{equation*}
Therefore we proved
\begin{eqnarray*}
\lim_{M_0\to\infty}\int_{-\infty}^0\rho(\eta;M_0)\phi(\eta)\dd\eta =\int_{-\infty}^0\phi(\eta)\dd\eta+\phi(0).
\end{eqnarray*}

Similarly, we have
\begin{eqnarray*}
&&\lim_{M_0\to\infty}\int_{-\infty}^0 \rho u(\eta;M_0)\phi(\eta)\dd\eta =\int_{-\infty}^0\phi(\eta)\dd\eta,\\
&&\lim_{M_0\to\infty}\int_{-\infty}^0 \rho u^2(\eta;M_0)\phi(\eta)\dd\eta =\int_{-\infty}^0\phi(\eta)\dd\eta,\\
&&\lim_{M_0\to\infty}\int_{-\infty}^0 p(\eta;M_0)\phi(\eta)\dd\eta =0,\\
&&\lim_{M_0\to\infty}\int_{-\infty}^0 \rho  E(\eta;M_0)\phi(\eta)\dd\eta =E_0\int_{-\infty}^0\phi(\eta)\dd\eta+E_0\phi(0),\\
&&\lim_{M_0\to\infty}\int_{-\infty}^0 \rho u  E(\eta;M_0)\phi(\eta)\dd\eta =E_0\int_{-\infty}^0\phi(\eta)\dd\eta,\\
&&\lim_{M_0\to\infty}\int_{-\infty}^0  up(\eta;M_0)\phi(\eta)\dd\eta =0.
\end{eqnarray*}
Hence we proved that
\begin{equation}\label{eq42}
  \begin{split}
     &\lim_{M_0\to\infty}\int_{-\infty}^0F(V(\eta;M_0))\phi(\eta)\dd\eta=F(V_0(\infty))\int_{-\infty}^0\phi(\eta)\dd\eta+ \phi(0)(1, 0, E_0),\\
     &\lim_{M_0\to\infty}\int_{-\infty}^0G(V(\eta;M_0))\phi(\eta)\dd\eta=G(V_0(\infty))\int_{-\infty}^0\phi(\eta)\dd\eta,
  \end{split}
\end{equation}
where
\begin{equation*}
  \begin{split}
     &F(V_0(\infty))=\lim_{M_0\to\infty}(\rho_0, \rho_0u_0, \rho_0E_0)
=(1,1,E_0),\\
&G(V_0(\infty))=\lim_{M_0\to\infty}(\rho_0u_0, \rho_0u_0^2+p_0, \rho_0u_0E_0+u_0p_0)
=(1,1,E_0).
  \end{split}
\end{equation*}

Next we return to the $(t,x)$-plane. Let $\psi(t,x)\in C_0(\R^2)$ be a test function, and $\phi(t,\eta)=\psi(t,\eta t)$. Then by change-of-variables and Lebesgue dominant convergence theorem, we have
\begin{equation}\label{eq43add}
\begin{split}
\lim_{M_0\to\infty}\int_\Omega F(U(t,x;M_0))\psi(t,x) \dd x\dd t
=&\int_0^\infty t\lim_{M_0\to\infty} \int_{-\infty}^{0}F(V(\eta;M_0))\phi(t,\eta) \dd\eta \dd t\\
=&\int_0^{\infty} F(V_0(\infty))\int_{-\infty}^{0}\phi(t,\eta)t \dd\eta\dd t+(t,0, E_0t)\int_0^\infty \phi(t,0) \dd t\\
=&\int_\Omega F(V_0(\infty))\psi(t,x)\,\dd x\dd t+ (t,0, E_0t)\int_0^\infty \phi(t,0) \dd t,\\
\lim_{M_0\to\infty}\int_\Omega G(U(t,x;M_0))\psi(t,x) \dd x\dd t
=&\int_0^\infty t\lim_{M_0\to\infty} \int_{-\infty}^{0}G(V(\eta;M_0))\phi(t,\eta) \dd\eta \dd t\\
=&\int_0^{\infty} G(V_0(\infty))\int_{-\infty}^{0}\phi(t,\eta)t \dd\eta\dd t=\int_\Omega G(V_0(\infty))\psi(t,x)\,\dd x\dd t.
\end{split}
\end{equation}
These imply that all the quantities in \eqref{eq313} converge  to the following measures  in the sense of weak convergence of measures:
\begin{eqnarray}\label{eqms}\begin{cases}
\varrho=\ti\LL^2+t\delta_{P},\quad m=\ti\LL^2,\quad n=\ti\LL^2,\\
m^1=E_0\ti\LL^2+E_0t\delta_{P},\quad
n^1=E_0\ti\LL^2,\quad n^2=0,\\
\wp=0,\quad w_p(t)=1.
\end{cases}\end{eqnarray}
Here by $\textsf{I}_A$  we denote the indicator function of a set $A$, {\it i.e.},
$\textsf{I}_A(t,x)=\begin{cases}
1, &(t,x)\in A,\\
0, &(t,x)\notin A.
\end{cases}$

Now taking $M_0\to\infty$ in \eqref{eqms1}-\eqref{eqms3}, where the measures are given by \eqref{eq313}, by the above convergence results, we infer that the  measures given by \eqref{eqms} provides a singular measure solution $(\varrho=\ti\LL^2+t\delta_{P}, u=\ti, E=E_0\ti)$ to the limiting case $M_0=\infty$ for Problem (B). This also justified consistency of Definition \ref{def32}. The proof of \underline{Case 2} in Theorem \ref{thm41} is completed.

\section{High Mach number limit for piston receding from gas}\label{sec4}
This section is devoted to the proof of Theorem \ref{thm51}.
We consider the piston moves backward from the gas. As stated in Section \ref{sec2}, the initial data now is given by \eqref{eqidrec}. We can as well  construct a solution of the form $U(t,x)=V(x/t)$ and then study its limiting behaviour corresponding to the two cases of high Mach number limit.

\subsection{Self-similar solutions containing rarefaction waves}

For any fixed $0<M_0<\infty$, we suppose the solution is composed of two constant states $V_0=(1,-1,E_0)$,  $V_1=(\rho_1,0,E_1)$, and a 1-rarefaction wave $V_m$ connecting them:
\begin{equation}\label{eq222}
U(t,x)=V(\frac{x}{t})=\begin{cases}
V_0,& -\infty\le\frac{x}{t}\leq\lambda_1(V_0),\\
V_m(\frac{x}{t}),& \lambda_1(V_0)\leq\frac{x}{t}\le\lambda_1(V_1),\\
V_1,& \lambda_1(V_1)\leq\frac{x}{t}\le0,
\end{cases}\end{equation}
where $\lambda_1(U)=u-c$ is the first eigenvalue of the system \eqref{eqeuler}.
By \cite[p. 353]{S}, for given left state $\rho_0,u_0,p_0$  and right state $\rho_1,u_1,p_1$, if they can be connected by a 1-rarefaction wave, then
\begin{equation}\label{eqrws}
  \begin{cases}\displaystyle
   \frac{p_1}{p_0}=\exp{(-s)},  \\
\displaystyle    \frac{\rho_1}{\rho_0}=\exp{(-s/\gamma)},  \\
\displaystyle    \frac{u_1-u_0}{c_0}=\frac{2}{\gamma-1}\Big(1-\exp(-\frac{\gamma-1}{2\gamma}s)\Big),
  \end{cases}
\end{equation}
where $s\geq0$ is a parameter.  Therefore, we have
\begin{equation}\label{eqrwm}
  \begin{cases}\displaystyle
    p_m(s)=p_0\exp{(-s)},\\
    \displaystyle\rho_m(s)=\rho_0\exp{(-s/\gamma)},\\
   \displaystyle u_m(s)=u_0+\frac{2c_0}{\gamma-1}\Big(1-\exp(-\frac{\gamma-1} {2\gamma}s)\Big),
  \end{cases}
\end{equation}
with $s\geq0$ satisfying $u_m(s)-c_m(s)=\eta=x/t\in(-\infty, 0]$.

Recall that $\rho_0=1$, $u_0=-1$, $p_0=(\gamma-1)(E_0-\frac{1}{2})$ and thus $c_0=\sqrt{\gamma(\gamma-1)(E_0-\frac{1}{2})}$,
by $\eqref{eqrwm}_2$ we have $\exp(-s/\gamma)=\rho_m$. Inserting it into $\eqref{eqrwm}_3$ gives
\begin{equation}\label{equrho}
  u_m=-1+2\sqrt{\frac{\gamma(E_0-\frac{1}{2})}{\gamma-1}} (1-\rho_m^{\frac{\gamma-1}{2}})=-1 +\frac{2}{(\gamma-1)M_0}(1-\rho_m^{\frac{\gamma-1}{2}}).
\end{equation}
In view of
$$ p_m=p_0\exp(-s)=(\gamma-1)(E_0-\frac{1}{2})\rho_m^\gamma, $$
we have
$$ c_m=\sqrt{\frac{\gamma p_m}{\rho_m}}=\sqrt{\gamma(\gamma-1)(E_0-\frac{1}{2}) \rho_m^{\gamma-1}}.
$$
Then for the 1-rarefaction wave, the characteristic speed is
\begin{equation}\label{eqrwxi}
  \eta=u_m-c_m=-1+2\sqrt{\frac{\gamma(E_0-\frac{1}{2})}{\gamma-1}} (1-\rho_m^{\frac{\gamma-1}{2}})-\sqrt{\gamma(\gamma-1)(E_0-\frac{1}{2}) \rho_m^{\gamma-1}}.
\end{equation}
We thus could solve from this formula $\rho_m$ as a function of $\eta=x/t\in(-\infty, 0]$,  with $\gamma>1$ and $E_0>1/2$ being parameters. In this way we obtain integral weak solution $U(t,x; M_0)$ to Problem (B) for any fixed $M_0<\infty$.

We go on to see if and where vacuum will present as the piston moves away from the gas. As we know, the density decreases along the rarefaction wave as $\eta$ increases. If $\rho_m(\eta)\neq0$ when $\eta=0$, then there is no vacuum in the tube.
In this situation, if $u_1=0$, inserting it into \eqref{equrho} gives
\begin{equation}\label{eq46}
  \rho_1=\left(1-\frac{({\gamma-1})M_0}{2}\right)^{\frac{2}{\gamma-1}}.
\end{equation}
Therefore $\rho_1>0$ if and only if
\begin{equation}\label{eqrwM}
 M_0<\frac{2}{\gamma-1}.
\end{equation}
This guarantees non-existence of vacuum ahead of the piston.

If vacuum do appear in the tube, then substituting  $\rho_m=0$ into \eqref{eqrwxi} shows that the speed of vacuum boundary is
\begin{equation}\label{eqrwxivac}
\begin{split}
  \eta=-1+2\sqrt{\frac{\gamma(E_0-\frac{1}{2})}{\gamma-1}}=-1+\frac{2}{(\gamma-1)M_0}.
  \end{split}
\end{equation}
Hence the domain occupied by vacuum  is $$-1+\frac{2}{(\gamma-1)M_0}\leq\eta=\frac{x}{t}\leq0.$$

We now study the high Mach number limit.

\subsection{High Mach number limit }\label{sec42}
We firstly consider \underline{Case 1}, namely $\gamma>1$ being fixed, while $E_0\to1/2$. Taking the limit on both sides of \eqref{eqrwxivac} gives
\begin{equation}\label{eqlimitxi}
  \eta=\lim_{M_0\to\infty}(-1+\frac{3-\gamma} {(\gamma-1)M_0})=-1.
\end{equation}
Since $-1\leq u_m\leq 0$ and $0\leq\rho_m\leq1$, \eqref{eqlimitxi} means that in the limiting case the rarefaction wave degenerates to a line $x=-t$, which is a contact discontinuity, and beyond the line, the density and thus the pressure $p$ vanish. The limiting solution is given by  \begin{equation}\label{eq46}
U(t,x;\infty)=V(\frac{x}{t};\infty)=\begin{cases}
(1,-1,1/2),& -\infty\le\frac{x}{t}\le-1,\\
(0,0,0),& -1<\frac{x}{t}\le0,
\end{cases}\end{equation}
or $(\varrho=\textsf{I}_{\{x\le-t\}}\LL^2, u=-\textsf{I}_{\{x\le-t\}}, E=\frac12\textsf{I}_{\{x\le-t\}})$
It is straightforward to prove that as $M_0\to\infty$,
the solution $U(t,x; M_0)$ converges weakly to $U(t,x;\infty)$ in the sense of measures,
which is also a solution of the pressureless Euler equations:
\begin{equation}\label{eq21}
\begin{cases}
\displaystyle \del_t\rho+ \del_x(\rho u)=0,\\[8pt]
\displaystyle \del_t(\rho u)+\del_x(\rho u^2)=0,\\[8pt]
\displaystyle \del_t(\rho E)+\del_x(u\rho E)= 0,
\end{cases}
\end{equation}
which corresponds to \eqref{eq223} with $\wp=0, w_p=0$. We thus proved \underline{Case 1} claimed in Theorem \ref{thm51}.

Next we consider \underline{Case 2}, for which $E_0>1/2$ is fixed, and $\gamma\to 1$.  It follows from $E_0=\frac12+\frac{1}{\gamma(\gamma-1)M_0^2}$ ({\it cf.} \eqref{eqE0}) that
\begin{eqnarray}\label{eq412}
M_0=\frac{1}{\sqrt{\gamma(\gamma-1)(E_0-\frac12)}}.
\end{eqnarray}
Then $(\gamma-1)M_0=O({1}/{M_0})$. It means that \eqref{eqrwM} holds for sufficiently large Mach number $M_0$ and thus no vacuum appears ahead of the piston for large but finite  $M_0$.

Now from \eqref{eq412} and \eqref{equrho}, we have
$$\rho_m=\left(1-\frac{u_m+1}{2}\sqrt{\frac{\gamma-1}{\gamma(E_0-\frac12)}}\right)^{\frac{2}{\gamma-1}},$$
hence $\lim_{M_0\to\infty}\rho_m=\lim_{\gamma\to 1}\rho_m=0$ whenever $u_m>-1$. This means in the limit the gas is also rarefied to vacuum in the domain $-1<x/t\le0$, although we knew for any fixed $M_0$ large, there is no vacuum. Furthermore, we have
\begin{equation*}
  \begin{split}
     \lim_{M_0\to\infty}p_0&=0,\quad \lim_{M_0\to\infty}p_m =\lim_{M_0\to\infty}(\gamma-1)\rho_m^\gamma(E_0-\frac12)=0;\\
     \lim_{M_0\to\infty}e_m&=\lim_{M_0\to\infty} \frac{1}{ \gamma-1}\frac{p_m}{\rho_m}
     =E_0-\frac12, \quad \lim_{M_0\to\infty}c_m=0.
  \end{split}
\end{equation*}
The limiting solution might be written as
\begin{equation}\label{eq413}
U'(t,x;\infty)=V'(\frac{x}{t};\infty)=\begin{cases}
(1,-1,E_0),& -\infty\le\frac{x}{t}\le-1,\\
(0,0,0),& -1<\frac{x}{t}\le0,
\end{cases}\end{equation}
which is also a solution to the pressureless Euler equations \eqref{eq21}. The weak convergence of $U(t,x; M_0)$ to $U'(t,x;\infty)$ in the sense of measures and consistency could be checked directly. We thus proved \underline{Case 2} in Theorem \ref{thm51}.


%
%

%



\end{document}